\newtheorem{corollary}{Corollary}[section]
\newtheorem{theorem}[corollary]{Theorem}
\newtheorem{proposition}[corollary]{Proposition}
\newtheorem{lemma}[corollary]{Lemma}
\theoremstyle{definition}
\newtheorem{definition}[corollary]{Definition}
\newtheorem{remark}[corollary]{Remark}
\newcounter{abccnt}
\newenvironment{abc}{\begin{list}{\bf(\roman{abccnt})}{\usecounter{abccnt}
\labelwidth4ex \labelsep1ex \leftmargin6ex
\parsep3pt \itemsep1pt \topsep3pt}}{\end{list}}
\newcommand{\abs}[1]{\left\vert#1\right\vert}
\newcommand{\set}[1]{\left\{#1\right\}}
\newcommand{\complex}{\mathbb C}
\newcommand{\zint}{\mathbb Z}
\newcommand{\To}{\longrightarrow}
\newcommand{\upontop}[2]{\genfrac{}{}{0pt}{}{#1}{#2}}
\begin{document}

\title{Characteristics of Invariant Weights\\ 
  Related to Code Equivalence over Rings}

\author{Marcus Greferath, Cathy Mc Fadden, and Jens Zumbr\"agel%
  \thanks{School of Mathematical Sciences, University College Dublin,
    and Claude Shannon Institute for Discrete Mathematics, Coding,
    Cryptography, and Information Security, Dublin, Republic of
    Ireland.  Email: \{marcus.greferath, cathy.mcfadden,
    jens.zumbragel\}@ucd.ie\newline This work was supported in part by
    the Science Foundation Ireland under Grants 06/MI/006 and
    08/IN.1/I1950.}}

\date{}
 

\maketitle

\begin{abstract}
  \noindent
  The Equivalence Theorem states that, for a given weight on the
  alphabet, every linear isometry between linear codes extends to a
  monomial transformation of the entire space. This theorem has been
  proved for several weights and alphabets, including the original
  MacWilliams' Equivalence Theorem for the Hamming weight on codes
  over finite fields. The question remains: What conditions must a
  weight satisfy so that the Extension Theorem will hold? In this
  paper we provide an algebraic framework for determining such
  conditions, generalising the approach taken in \cite{greferath06}.\medskip

  \noindent Keywords: MacWilliams' Equivalence Theorem, Extension
  Theorem, Weight Functions, Ring-Linear Codes.\smallskip
    
  \noindent AMS Subject Classification: 94B05, 11T71, 05E99
\end{abstract}


\section*{Introduction}

Two linear codes of the same length over a given alphabet are said to
be equivalent if there exists a (weight preserving) monomial
transformation mapping one to the other. MacWilliams in her doctoral
thesis \cite{macwilliams63} proved that when the alphabet is a finite
field any linear Hamming isometry between linear codes will extend to
a monomial transformation. Thus the equivalence question can be seen
as an extension problem. A character theoretic proof of this Extension
Theorem in \cite{ward96} led to a generalisation of this theorem for
codes over finite Frobenius rings in \cite{wood97}. Indeed in
\cite{wood08} it was shown that linear Hamming isometries extend
precisely when the ring is Frobenius.

In the seminal paper on ring linear coding \cite{hammons94} it was
noticed that weights other than the Hamming weight would play a
significant role, such as the Lee weight over $\zint_{4}$.  The
concept of a homogenous weight was first introduced in
\cite{constantinescu96a} where a combinatorial proof of the Extension
Theorem for this weight and codes over $\zint_{m}$ is provided. In
\cite{greferath00} we see that every homogeneous isometry is a Hamming
isometry yielding the Extension Theorem for the homogeneous weight and
codes over finite Frobenius rings.  This paper followed the
combinatorial tack of \cite{constantinescu96a} for the $\zint_{m}$
case. For the more general case of codes over modules the Extension
Theorem holds for Hamming weights as seen in \cite{greferath04}.

Following from the chain ring result of \cite{greferath05}, obtained
by examining the generation of invariant weights, a complete
characterisation of those weights for which the Equivalence Theorem
holds for codes over $\zint_{m}$ is supplied in
\cite{greferath06}. Here we extend the ideas of that paper to more
general rings, outlining a strategy for attaining necessary and
sufficient conditions for a weight to satisfy the Extension Theorem.

We begin in Section~\ref{sec:prelims} by revising some key properties
of the M\"{o}bius Function and chain rings.  In
Section~\ref{sec:weightdefs} we define codes, weights and the
equivalence condition for the ring case.  In
Section~\ref{sec:convcorr} we describe the structural context so
crucial to the elegance and seeming simplicity of our results.  Then,
after a short section on finite products of chain rings, we finally
provide in Section~\ref{sec:modgen} a sufficient condition for an
invariant weight to satisfy the generalised MacWilliams' Equivalence
Theorem.


\section{Algebraic and Combinatorial Preliminaries}%
\label{sec:prelims}

In the following sections we will harness the power of M\"{o}bius
Inversion to prove our most vital results. We state the key points
here, for more details see \cite{roman92}.

\begin{definition}\label{mobiusdefn}
  Consider a field ${\mathbb F}$ and a finite partially ordered set
  $P$ with partial ordering $\leq$. The {\em M\"{o}bius function\/},
  $\mu : P \times P \longrightarrow {\mathbb F}$, is defined by
  $\mu(x,y)=0$ for $x\nleqslant y$, and any of the four equivalent
  statements:
  \begin{abc}
  \item \quad $ \mu (x,x) = 1$ \quad and \quad $\displaystyle 
     \sum_{x \leq z \leq y}{\mu(z,y)} = 0$ \ for $x<y$
  \item \quad $ \mu (x,x) = 1$ \quad and \quad $\displaystyle
    \sum_{x \leq z \leq y}{\mu(x,z)} = 0$ \ for $x<y$
  \item \quad $ \mu (x,x) = 1$ \quad and \quad $\displaystyle
    \mu (x,y) = -\! \sum_{x < z \leq y}{\mu(z,y)} $ \ for $x<y$
  \item \quad $ \mu (x,x) = 1$ \quad and \quad $\displaystyle
    \mu (x,y) = -\! \sum_{x \leq z < y}{\mu(x,z)} $ \ for $x<y$
  \end{abc}
\end{definition}

\begin{theorem}
  Let $P$, ${\mathbb F}$, and $\mu$ be as above and let $f,g$ be
  functions from $P$ to ${\mathbb F}$.  If $P$ has least element 0
  then:
  \[ g(x) = \sum_{ y \leq x}{f(y)} \;\;\mbox{for all $x\in P$}\quad
  \Leftrightarrow \quad f(x) = \sum_{y \leq x}{g(y) \mu (y,x)} \;\;
  \mbox{for all $x\in P$}. \] 
  If additionally the partially ordered set $P$ has a greatest
  element~1 then:
  \[ g(x) = \sum_{ x \leq y}{f(y)} \;\;\mbox{for all $x\in P$}\quad 
  \Leftrightarrow \quad f(x) = \sum_{x \leq y}{g(y) \mu (x,y)}\;\;
  \mbox{for all $x\in P$}. \]
\end{theorem}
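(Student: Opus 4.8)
The plan is a direct verification by substitution, exploiting the defining recursions of the Möbius function; no induction or structural machinery is needed. Since $P$ is finite, every principal order ideal $\{\,y : y \le x\,\}$ and every order filter $\{\,y : x \le y\,\}$ is finite, so all the sums that arise are finite and the order of summation may be interchanged without comment.

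For the first equivalence I would argue both implications by plugging one identity into the other. For ``$\Rightarrow$'', assuming $g(y) = \sum_{z \le y} f(z)$ for all $y$, I substitute and swap the summation order to obtain
\[
  \sum_{y \le x} g(y)\,\mu(y,x) \;=\; \sum_{z \le x} f(z) \sum_{z \le y \le x} \mu(y,x),
\]
and then invoke part~(i) of Definition~\ref{mobiusdefn}: the inner sum equals $1$ when $z = x$ and $0$ when $z < x$, so the double sum collapses to $f(x)$. For ``$\Leftarrow$'', assuming $f(y) = \sum_{z \le y} g(z)\mu(z,y)$, the same manipulation yields
\[
  \sum_{y \le x} f(y) \;=\; \sum_{z \le x} g(z) \sum_{z \le y \le x} \mu(z,y),
\]
and this time part~(ii) forces the inner sum to be the Kronecker delta of $z$ and $x$, leaving $g(x)$. (The hypothesis that $P$ has a least element is not actually needed when $P$ is finite; it is the condition under which the statement survives in a merely locally finite poset.)

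The second equivalence is handled by the order-reversed version of exactly this argument: from $g(y) = \sum_{z \ge y} f(z)$ one substitutes into $\sum_{y \ge x} g(y)\mu(x,y)$, reorders, and collapses the inner sum $\sum_{x \le y \le z}\mu(x,y)$ using part~(ii); conversely, from $f(y) = \sum_{z \ge y} g(z)\mu(y,z)$ one substitutes into $\sum_{y \ge x} f(y)$ and collapses $\sum_{x \le y \le z}\mu(y,z)$ using part~(i). I do not expect a genuine obstacle here — once the summation order is swapped the computation is a one-liner. The only thing requiring care is bookkeeping: matching each of the four implications with the appropriate one of the four equivalent characterisations of $\mu$, and tracking which of the two arguments of $\mu$ is serving as the summation index. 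That is the ``hard'' part, and it is purely notational.
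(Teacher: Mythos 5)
Your proof is correct. The paper does not actually prove this theorem---it is quoted as a standard fact with a pointer to \cite{roman92}---and your argument (substitute one identity into the other, interchange the order of summation over the finite poset, and collapse the inner sum to a Kronecker delta using the appropriate defining recursion (i) or (ii) of $\mu$) is exactly the standard textbook verification; the bookkeeping matching each implication to the right recursion is done correctly in all four cases, and your parenthetical observation that finiteness of $P$ already makes the least-element hypothesis superfluous is also accurate.
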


Now we include a brief summary of the key properties of chain rings
(c.f. \cite{lang02}, \cite{mcdonald74}, \cite{Kasch82}). In all of our
discussion let $R$ be a finite associative ring with identity
1. Denote by $R^{\times}$ the group of multiplicatively invertible
elements of $R$.

\begin{definition}
  A ring $R$ is called a {\em left chain ring\/} if the set of left
  ideals of $R$ forms a chain under the partial ordering of
  inclusion. Similarly for {\em right chain ring\/}. If $R$ is both a
  left and right chain ring then it is called a {\em chain ring\/}.
\end{definition}

The following theorem, combining Theorem 1.1 of \cite{nechaev73} and
Lemma 1 of \cite{clarke73}, demonstrates the numerous equivalent
definitions of a finite chain ring.  Recall a {\em principal
  left ideal ring\/} is a ring with identity in which each left ideal
is left principal, and a {\em principal ideal ring\/} is a ring which
is both a principal left ideal ring and a principal right ideal ring.

\begin{theorem}
  The following are equivalent:
  \begin{abc}
  \item $R$ is a local principal ideal ring.
  \item $R$ is a left chain ring.
  \item $R$ is a chain ring.
  \item $R$ is a local ring and ${\rm rad}(R)$ is a left principal
    ideal.
  \item Every one-sided ideal of $R$ is two-sided and belongs to the
    chain\\ $R \rhd {\rm rad}(R) \rhd ... \rhd {\rm rad}(R)^{n-1}
    \rhd {\rm rad}(R)^{n} = \{0\}$, for some $n \in {\mathbb N}$.
  \end{abc}
\end{theorem}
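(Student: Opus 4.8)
The plan is to prove the five statements equivalent by running around the cycle (i)$\Rightarrow$(iv)$\Rightarrow$(v)$\Rightarrow$(iii)$\Rightarrow$(ii)$\Rightarrow$(iv), together with the implication (iv)$\Rightarrow$(i); finiteness enters only through the fact that $R$ is Artinian on both sides, so that $J:={\rm rad}(R)$ is nilpotent. Three of the links are immediate: a chain ring is in particular a left chain ring, giving (iii)$\Rightarrow$(ii); a local principal ideal ring visibly has ${\rm rad}(R)$ left principal, giving (i)$\Rightarrow$(iv); and if every one-sided ideal sits in the displayed chain then the left ideals, and likewise the right ideals, are totally ordered, giving (v)$\Rightarrow$(iii). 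For (ii)$\Rightarrow$(iv) one notes first that a left chain ring has a unique maximal left ideal $M$, so every $x\notin M$ satisfies $Rx=R$, hence is left invertible, hence --- by finiteness --- a unit; thus $R$ is local with $J=M$. If $J\ne0$ then $J^2\subsetneq J$ by nilpotence, and since the left ideals form a chain the $R/J$-space $J/J^2$ admits no two incomparable lines, so it is one-dimensional; picking $\pi\in J\setminus J^2$ then gives $J=R\pi+J^2$, whence $J=R\pi$ by Nakayama's lemma.

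The substantial step is (iv)$\Rightarrow$(v). Writing $J=R\pi$ and using $JR=J$, an easy induction gives $J^k=R\pi^k$ for all $k$; since $J^k\ne J^{k+1}$ whenever $k$ is below the nilpotency index $n$, each $J^k/J^{k+1}$ is a nonzero cyclic module over the field $R/J$, hence a line, so that $\abs{J^k}=\abs{R/J}^{\,n-k}$. Consequently any nonzero left ideal $I$ equals $J^k$ for the largest $k$ with $I\subseteq J^k$: an element of $I\setminus J^{k+1}$ has the form $u\pi^k$ with $u$ a unit, forcing $\pi^k\in I$ and $I=R\pi^k=J^k$. Thus the left ideals are precisely the two-sided, principal members of the chain $R\rhd J\rhd\dots\rhd J^n=\{0\}$. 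To obtain the same for right ideals one needs $J=\pi R$, i.e.\ that ${\rm rad}(R)$ is right principal on the same generator --- this is Clarke's Lemma, and it is here that the hypothesis of finiteness is genuinely used. The argument: each $J^k/J^{k+1}$ with $k<n$ is annihilated by $J$ on both sides, hence is a vector space over $R/J$ on the left and on the right; it is one-dimensional on the left by $J^k=R\pi^k$, and a set of size $\abs{R/J}$ carrying a right $R/J$-space structure is one-dimensional on the right too; since $\pi^k\notin J^{k+1}$ (otherwise $J^k=R\pi^k\subseteq R\pi^{k+1}=J^{k+1}$) the element $\pi^k$ spans it, so $J^k=\pi^kR+J^{k+1}=\pi^kR+J^kJ$, and Nakayama's lemma gives $J^k=\pi^kR$; in particular $J=\pi R$. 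Now the left-ideal argument applies verbatim on the right, establishing (v); and as every ideal of $R$ is then one of the principal ideals $R\pi^k=\pi^kR$ while $R$ is local, (i) follows as well, closing the cycle.

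The one step needing more than Nakayama's lemma and the nilpotence of the radical --- and the sole point where $R$ being finite is truly indispensable --- is the cardinality argument transporting one-dimensionality of $J^k/J^{k+1}$ from the left to the right, that is, the proof that ${\rm rad}(R)=\pi R$. The remainder is bookkeeping with powers of the radical.
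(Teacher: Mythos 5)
The paper does not actually prove this theorem: it is stated as a combination of Theorem~1.1 of Nechaev and Lemma~1 of Clarke and left without proof, so there is no in-paper argument to compare against. Your blind proof is, as far as I can check, correct and self-contained, and it reconstructs exactly the classical line of reasoning behind those citations. The easy implications (i)$\Rightarrow$(iv), (iii)$\Rightarrow$(ii), (v)$\Rightarrow$(iii) are handled correctly; the step (ii)$\Rightarrow$(iv) via locality plus one-dimensionality of $J/J^2$ and Nakayama is standard and sound; the computation $J^k=R\pi^k$ from $JR=J$, and the observation that any element of $J^k\setminus J^{k+1}$ is a unit times $\pi^k$, correctly classify all left ideals. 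You also correctly isolate the genuinely nontrivial point --- passing from $J=R\pi$ to $J=\pi R$ --- and your cardinality argument (the bimodule $J^k/J^{k+1}$ has $\abs{R/J}$ elements, hence is one-dimensional as a right $R/J$-space, so $\pi^k$ spans it and Nakayama yields $J^k=\pi^kR$) is precisely the content of Clarke's Lemma that the paper cites. Two cosmetic points you may wish to make explicit: the degenerate case $J=0$ (so that $R$ is a finite division ring and the chain is just $R\rhd\{0\}$) should be mentioned so that ``pick $\pi\in J\setminus J^2$'' is not vacuous, and note that no appeal to Wedderburn is needed since all the dimension counts work over the division ring $R/J$ directly. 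Neither affects correctness.
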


\begin{remark}
  Note that in the above if $n > 1$, then ${\rm rad}(R)^i = R \pi^i =
  \pi^i R$ for any $\pi \in {\rm rad}(R) \setminus {\rm rad}(R)^2$,
  $i \in \{1 \dots n\}$.
  Wood noted in \cite{wood00} that
  \[ {\rm rad}(R)^i \setminus {\rm rad}(R)^{i+1} = R^{\times} \pi^i 
    = \pi^i R^{\times} \:. \]
  This property extends in a natural way to finite direct products of
  chain rings and, combined with our structural approach, facilitates
  the proof of the main theorems herein.
\end{remark}


\section{Weight Functions and the Equivalence Theorem}%
\label{sec:weightdefs}

Let the left symmetry group of any function $f: R \to \complex$ be
given by ${\rm Sym}_\ell(f) := \{u \in R^{\times} \mid f(x) = f(ux)\
\forall x \in R \}$ and the right symmetry group by ${\rm Sym}_r(f) :=
\{u \in R^{\times} \mid f(xu) = f(x)\ \forall x \in R\}$.  By a weight
on $R$ we mean any function $w : R \to \complex$ satisfying $w(0) =
0$. A weight $w$ is called {\em invariant\/} if both symmetry groups
are maximal, i.e.\ if they coincide with $R^\times$. Note that for a
finite ring $Rx \, =\, Ry$ implies $R^{\times}x = R^{\times}y$, as
detailed in \cite{wood99b}, hence if ${\rm Sym}_\ell(w)= R^{\times}$
then $w(x) \, =\, w(y)$.

\begin{definition}
  An invariant weight $w$ on $R$ is called {\em homogeneous\/}, if
  there exists a real number $c\geq 0$ such that for all $x\in R$
  there holds:
  \[ \sum_{y\in Rx} w(y) \; =  \; c \,|Rx|\quad \mbox{ if } x\neq 0 \:. \]
\end{definition}

The concept of a homogeneous weight was originally introduced in
\cite{constantinescu96} and further generalised in two different
directions: one is given in the work by Nechaev and
Honold~\cite{nechaev99}, in which the term homogeneous weight is
reserved for those with constant average weight on {\em every\/}
nonzero ideal. The other can be seen in the work by Greferath and
Schmidt~\cite{greferath00}, where the constant average property is
postulated only for principal ideals. Both definitions are equivalent
for the class of finite Frobenius ring.

This article follows the line given in \cite{greferath00} and hence,
every finite ring allows for a homogeneous weight. The case of average
value $1$ is referred to as the {\em normalised\/} homogeneous weight
$w_{\rm hom}$.

\begin{definition}
  The normalised homogeneous weight $w_{\rm hom}: R \to \mathbb{R}$ is
  given by
  \[ w_{\rm hom} (x) = 1 - \frac{\mu(0, Rx)}{|R^\times x|} \:, \]
  where $\mu$ is the M\"{o}bius function on the lattice of principal
  ideals of $R$, from Definition \ref{mobiusdefn}, and $|R^\times x|$
  counts the number of generators of the ideal $Rx$ as proved in
  \cite{greferath00}.
\end{definition}

Given a positive integer $n$, any weight $w : R \to \complex$ shall be
extended to a function on $R^{n}$ by defining $w(x) := w(x_1) + w(x_2)
+ \dots + w(x_n)$ for $x \in R^{n}$. Suppose that $C$ is a linear code
of length $n$ over $R$, i.e.\ an $R$-submodule of ${}_RR^{n}$. A map
$\phi : C \to {}_RR^{n}$ is called a $w$-isometry if $w ( \phi(x) )
= w(x)$ for all $x \in C$.

A bijective module homomorphism $\phi : {}_RR^{n} \to {}_RR^{n}$
is called a {\em monomial transformation\/} if there exists a
permutation $\pi$ of $\{1 \dots n\}$ and units $u_1, \dots, u_n \in
R^{\times}$ such that $\phi(x) = ( x_{\pi(1)} u_1 , \dots , x_{\pi(n)}
u_n )$ for every $x =(x_1 , \dots , x_n ) \in R^{n}$. If all the units
$u_{i}$ are contained in a subgroup $G$ of $R^{\times}$ we call it a
{\em $G$-monomial transformation\/}.
 
Clearly any ${\rm Sym}_r(w)$-monomial transformation will be a
$w$-isometry for any weight $w$ and hence restricts to a $w$-isometry
on every linear code $C \subseteq {}_RR^{n}$. Conversely we may ask
if a given linear $w$-isometry $\phi : C \to {}_RR^{n}$, defined on
a linear subcode $C$ of $R^{n}$ is a restriction of an appropriate
monomial transformation of $R^{n}$. This is the essence of
MacWilliams' Equivalence Theorem:

\begin{theorem}[MacWilliams \cite{macwilliams63}]
  Every linear Hamming isometry between linear codes of the same
  length over a finite field can be extended to a monomial
  transformation of the ambient vector space.
\end{theorem}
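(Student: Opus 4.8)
The plan is to prove MacWilliams' Equivalence Theorem via the character-theoretic method of Ward and Wood, since this is the approach that generalises to the ring setting that the paper develops. Let $\mathbb{F} = \mathbb{F}_q$, let $C \subseteq \mathbb{F}^n$ be a linear code, and let $\phi : C \to \mathbb{F}^n$ be a linear Hamming isometry. Composing with a coordinate projection, write $\phi(x) = (\langle x, a_1 \rangle, \dots, \langle x, a_n \rangle)$ for suitable $a_j$ in the dual space $C^*$ (identifying each coordinate functional on the image with an element of $\mathrm{Hom}(C, \mathbb{F})$); symmetrically the inclusion $C \hookrightarrow \mathbb{F}^n$ gives functionals $b_1, \dots, b_n \in C^*$. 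The Hamming weight of a vector $y \in \mathbb{F}^n$ can be written via characters: fixing a nontrivial additive character $\chi$ of $\mathbb{F}$, one has $\mathrm{wt}(y) = n - \frac{1}{q}\sum_{i=1}^n \sum_{t \in \mathbb{F}} \chi(t\, y_i)$. Applying this to $\phi(x)$ and to $x$, the isometry condition $\mathrm{wt}(\phi(x)) = \mathrm{wt}(x)$ for all $x \in C$ becomes the identity
\[
\sum_{j=1}^n \sum_{t \in \mathbb{F}^\times} \chi\bigl(t\,\langle x, a_j\rangle\bigr) \;=\; \sum_{j=1}^n \sum_{t \in \mathbb{F}^\times} \chi\bigl(t\,\langle x, b_j\rangle\bigr) \qquad \text{for all } x \in C.
\]

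Next I would exploit the orthogonality of the characters $x \mapsto \chi(\langle x, a\rangle)$ of the group $(C, +)$ as $a$ ranges over $C^*$. Grouping terms on each side by which functional $c \in C^*$ equals $t a_j$ (resp.\ $t b_j$), the displayed identity says that two formal $\mathbb{Z}_{\ge 0}$-linear combinations of the characters of $C$ coincide; by linear independence of distinct characters, the multiplicity of each character $c \in C^* \setminus \{0\}$ must agree on both sides. In other words, for every nonzero functional $c$, the number of pairs $(j,t)$ with $t \in \mathbb{F}^\times$ and $t a_j = c$ equals the number with $t b_j = c$. Since $\mathbb{F}^\times$ acts freely on the nonzero functionals, this is exactly the statement that the multisets $\{\mathbb{F}^\times a_j : a_j \ne 0\}$ and $\{\mathbb{F}^\times b_j : b_j \ne 0\}$ of punctured lines are equal. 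Hence there is a permutation $\pi$ of the indices and scalars $u_j \in \mathbb{F}^\times$ with $a_j = u_j b_{\pi(j)}$ for all $j$ with $a_j \ne 0$; the zero functionals among the $a_j$ and $b_j$ must then also match up in number, and $\pi$ can be chosen to pair them as well (with arbitrary $u_j$).

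Finally I would assemble the monomial transformation. Define $\psi : \mathbb{F}^n \to \mathbb{F}^n$ by $\psi(y)_j = u_j\, y_{\pi(j)}$; this is a $\mathbb{F}^\times$-monomial transformation, hence a Hamming isometry of the whole space. For $x \in C$ one computes $\psi(x)_j = u_j\, x_{\pi(j)} = u_j \langle x, b_{\pi(j)}\rangle = \langle x, a_j\rangle = \phi(x)_j$ on the coordinates where $a_j \ne 0$, and both sides are $0$ elsewhere, so $\psi|_C = \phi$ and $\phi$ extends as required. The one genuinely delicate point — and the step I expect to be the main obstacle to make fully rigorous — is the passage from the character identity to equality of the multisets of lines: one must be careful that the ``coefficients'' being compared really are nonnegative integer multiplicities and that no cancellation between the contributions of distinct functionals can occur, which is precisely why the linear independence of characters of the finite abelian group $(C,+)$ is invoked rather than any cruder counting argument. (In the ring-theoretic generalisation pursued later in the paper, this is exactly where Frobenius-ness, or the appropriate module-theoretic hypothesis, will enter.)
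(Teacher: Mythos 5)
The paper does not actually prove this statement: it is quoted as MacWilliams' classical theorem with a citation to her thesis, and the introduction only remarks that Ward's character-theoretic proof is what led to the Frobenius-ring generalisations. So there is no in-paper argument to compare against; judged on its own, your proof is correct and complete. You reconstruct exactly the Ward--Wood argument: expressing the Hamming weight by the character sum $\mathrm{wt}(y)=n-\frac{1}{q}\sum_i\sum_{t}\chi(ty_i)$, reading the coordinates of the inclusion and of $\phi$ as functionals $b_j,a_j\in C^*$, and using linear independence of the distinct characters $x\mapsto\chi(\langle x,c\rangle)$ of $(C,+)$ to equate the nonnegative integer multiplicities $\#\{(j,t): ta_j=c\}$ and $\#\{(j,t): tb_j=c\}$. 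The freeness of the $\mathbb{F}^\times$-action on $C^*\setminus\{0\}$ then turns this into equality of the multisets of punctured lines, the zero functionals match up by counting, and the resulting permutation and units assemble into the extending monomial map. You are also right that the delicate step is the passage from the character identity to the multiset matching, and that this is precisely where the field case is easier than the ring case: over a field the unit group acts freely on nonzero functionals so the multiplicity count immediately identifies orbits, whereas over a Frobenius ring one needs the identification of $\widehat{R}$ with $R$ together with a greedy/maximality argument to match orbits. This character-theoretic route is quite different in flavour from the combinatorial M\"obius-function and correlation-module machinery the paper itself develops for its main theorem, but for the statement at hand it is the standard and correct proof.
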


\begin{definition}
  Suppose $w$ is an arbitrary weight. We say that {\em MacWilliams'
    Equivalence Theorem\/} (or the {\em Extension Theorem\/}) holds
  for $w$ on $R$ if for each positive integer $n$, linear code $C$ in
  ${}_RR^{n}$ and linear $w$-isometry $\phi : C \to {}_RR^{n}$
  there exists a ${\rm Sym}_r(w)$-monomial transformation of $R^{n}$
  which extends $\phi$.
\end{definition}


An obvious necessary condition for MacWilliams' Equivalence Theorem to
hold for a weight~$w$ on~$R$ is that all $w$-isometries are injective.


\section{Convolution and Correlation}%
\label{sec:convcorr}

Two key operations, convolution and correlation, allow us to define a
module of weights over an algebra of complex functions. Consider the
set $\complex^R$ of all functions $\{ f \mid f:R \to \complex\}$. For
$f, g \in \complex^R$ and for $\lambda \in \complex$ we define
addition and scalar multiplication by
\begin{align*} 
  (f + g)(x) &:= f(x) + g(x) \\
  (\lambda f)(x) &:= \lambda f(x) \:, 
\end{align*}
then $V = [\complex^R, +, 0; \complex]$ is a $\complex$-vector space.

\begin{definition}
  Let $f$ and $g$ be elements of $\complex^R$. We define the {\em
    multiplicative convolution\/} as a mapping:
  \begin{align*}
    \ast :  \complex^R \times \complex^R \To \complex^R, \ \ \ 
    (f,g) \mapsto f \ast g \\   
    \mbox{where }  \ \ ( f \ast g ) (x) := 
    \sum_{\upontop{a,b \in R,}{ ab=x}}{f(a) g(b)} \:.
  \end{align*}
\end{definition}

For each element $r\in R$ denote by $\delta_r$ the function defined
by:
\[ \delta_r(x) :=  \left\{
  \begin{array}{lcl}
    1 & \ :\  & x = r\\
    0 & : & \mbox{otherwise} \:. 
  \end{array}
\right. \]
We extend the notation to each subset $A$ of $R$ by defining $\delta_A
= \sum_{a \in A}{\delta_a}$. The multiplicative identity of the $\ast$
operation is $\delta_1$.

\begin{lemma}
  $\complex^R$, with addition and scalar multiplication as above and
  the operation $\ast$, is an algebra over $\complex$, which we call
  $\complex[R, \ast]$, or simply $\complex[R]$.
\end{lemma}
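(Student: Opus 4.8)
The plan is to verify directly that $\complex^R$ with the operations $+$, scalar multiplication, and $\ast$ satisfies all the axioms of an associative $\complex$-algebra with identity. We already know from the discussion preceding the lemma that $V=[\complex^R,+,0;\complex]$ is a $\complex$-vector space, so what remains is to check that $\ast$ is $\complex$-bilinear, associative, and has $\delta_1$ as a two-sided identity. Bilinearity is immediate from the defining formula $(f\ast g)(x)=\sum_{ab=x}f(a)g(b)$: fixing $g$, the map $f\mapsto f\ast g$ is clearly additive and commutes with scalars because each summand $f(a)g(b)$ is linear in $f(a)$; symmetrically in the second argument. That $\delta_1$ is an identity follows by writing out $(\delta_1\ast f)(x)=\sum_{ab=x}\delta_1(a)f(b)=f(x)$, since the only nonzero contribution is $a=1$, $b=x$; the computation $(f\ast\delta_1)(x)=f(x)$ is the mirror image, using that $1$ is a two-sided identity of $R$.

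The only substantive point is associativity, so I would treat that carefully. For $f,g,h\in\complex^R$ and $x\in R$ one computes
\[
\bigl((f\ast g)\ast h\bigr)(x)=\sum_{\upontop{c,d\in R}{cd=x}}(f\ast g)(c)\,h(d)
=\sum_{\upontop{c,d\in R}{cd=x}}\ \sum_{\upontop{a,b\in R}{ab=c}}f(a)\,g(b)\,h(d)
=\sum_{\upontop{a,b,d\in R}{(ab)d=x}}f(a)\,g(b)\,h(d),
\]
and an entirely analogous expansion gives
\[
\bigl(f\ast(g\ast h)\bigr)(x)=\sum_{\upontop{a,b,d\in R}{a(bd)=x}}f(a)\,g(b)\,h(d).
\]
Since multiplication in $R$ is associative, $(ab)d=a(bd)$ for all $a,b,d$, so the two index sets $\{(a,b,d):(ab)d=x\}$ and $\{(a,b,d):a(bd)=x\}$ coincide, and the two triple sums are identical. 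Hence $(f\ast g)\ast h=f\ast(g\ast h)$.

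In effect, the only real obstacle is bookkeeping: one must be comfortable reindexing the nested double sum as a single sum over triples $(a,b,d)$ with a prescribed product, and this is precisely the step where the associativity of $R$ is used — no other property of $R$ (not even finiteness, beyond ensuring the sums are finite) enters. I would also remark in passing that $\complex[R,\ast]$ is commutative exactly when $R$ is commutative, and that the assignment $r\mapsto\delta_r$ embeds the multiplicative monoid of $R$ into $(\complex^R,\ast)$, which is the conceptual reason the whole structure works: $\complex[R,\ast]$ is nothing but the monoid algebra of $(R,\cdot)$ over $\complex$, and the lemma is the standard fact that a monoid algebra is an associative unital algebra.
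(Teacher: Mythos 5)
Your proof is correct and follows the same route as the paper, which simply asserts that associativity, distributivity, and the identity property of $\delta_1$ are clear; you spell out the one nontrivial verification (reindexing the nested sum over $(ab)d=x$ as a triple sum, where associativity of $R$ enters) that the paper leaves implicit. The closing observation that $\complex[R,\ast]$ is the monoid algebra of $(R,\cdot)$ is a nice conceptual summary, though not needed for the proof.
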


\begin{proof}
  
  It is clear that convolution is associative and additively
  distributive and that $\delta_{1}$ is indeed an identity.
  If $\lambda$ in $\complex$, then $\lambda(f \ast g) = (\lambda f) \ast
  g = f \ast (\lambda g).$ Thus $\complex[R]$ is indeed a complex
  algebra.
\end{proof}  

Note that $\delta_r \ast \delta_s = \delta_{r s}$ and that the set
$\set{ \delta_r \mid r \in R }$ forms a $\complex$-basis of
$\complex[R]$.

\begin{definition}
  Let $f, g$ and $w$ be elements of $\complex^R$. The left and right
  {\em multiplicative correlations\/} are given by
  \begin{align*}
    \circledast': S \times W &\To W \:, \qquad (f,w) \mapsto f \circledast' w\\
    \circledast\,: W \times S &\To W\:, \qquad (w,g) \mapsto w \circledast g
  \end{align*}
  respectively, where 
  \begin{align*}
    ( f \circledast' w ) (x) &\;:=\; \sum_{r \in R}{ f(r) w(xr) } \\
    ( w \circledast  g ) (x) &\;:=\; \sum_{r \in R}{ w(rx) g(r) } \:.
  \end{align*}
\end{definition}

\begin{lemma}
  Let $f,g,w \in \complex^R$, then convolution and correlation have
  the following relationships:
  \begin{align*}
    (f \ast g) \circledast' w &=  f \circledast' (g \circledast' w) \\
    w \circledast (f \ast g) &= ( w \circledast f) \circledast g \\
    g \circledast'( w \circledast f) &= (g \circledast' w) \circledast f \:.
  \end{align*}
\end{lemma}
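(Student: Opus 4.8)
The plan is to verify each of the three identities by unwinding the definitions of $\ast$, $\circledast'$, and $\circledast$ on an arbitrary argument $x \in R$, and then reorganising the resulting double sum over $R \times R$. In every case the left-hand side and the right-hand side, after expansion, become a sum of the form $\sum_{r,s \in R} (\text{product of three values, one each from }f,g,w)$ weighted by a $\delta$-condition coming from the convolution, and the two sides will agree after re-indexing. No deep structure of $R$ is used — only associativity of multiplication in $R$ and the (finite) distributive law for sums; there is no genuine obstacle here, just bookkeeping.

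For the first identity, I would compute
\[
  \bigl((f\ast g)\circledast' w\bigr)(x)
    \;=\; \sum_{t\in R} (f\ast g)(t)\, w(xt)
    \;=\; \sum_{t\in R}\;\sum_{\upontop{a,b\in R,}{ab=t}} f(a)g(b)\, w(xt)
    \;=\; \sum_{a,b\in R} f(a)\,g(b)\, w(xab) \:.
\]
On the other side,
\[
  \bigl(f\circledast'(g\circledast' w)\bigr)(x)
    \;=\; \sum_{a\in R} f(a)\,(g\circledast' w)(xa)
    \;=\; \sum_{a\in R} f(a)\;\sum_{b\in R} g(b)\, w(xab)
    \;=\; \sum_{a,b\in R} f(a)\,g(b)\, w(xab)\:,
\]
so the two sides coincide. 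The crucial point is that $w$ is evaluated at $x\cdot(ab)=(x a)b$, which is exactly where associativity in $R$ enters.

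The second identity is entirely symmetric: expanding $\bigl(w\circledast(f\ast g)\bigr)(x)=\sum_{t} w(tx)(f\ast g)(t)=\sum_{a,b} w(abx)\,f(a)g(b)$, and $\bigl((w\circledast f)\circledast g\bigr)(x)=\sum_{b}(w\circledast f)(bx)g(b)=\sum_{a,b} w(abx)\,f(a)g(b)$, one gets agreement, this time using $(ab)x=a(bx)$. For the third (mixed) identity, expand both sides to
\[
  \bigl(g\circledast'(w\circledast f)\bigr)(x)
    \;=\; \sum_{r\in R} g(r)\,(w\circledast f)(xr)
    \;=\; \sum_{r,s\in R} g(r)\, w(s\,xr)\, f(s)
\]
and
\[
  \bigl((g\circledast' w)\circledast f\bigr)(x)
    \;=\; \sum_{s\in R} (g\circledast' w)(sx)\, f(s)
    \;=\; \sum_{r,s\in R} g(r)\, w(s x\, r)\, f(s)\:,
\]
which are identical (the left correlation by $g$ touches the right-hand argument $r$ of $w$, the right correlation by $f$ touches the left-hand argument $s$, and these commute because they act on disjoint "slots" of the product $sxr$). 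In all three cases one should note that every sum is finite since $R$ is finite, so interchanging the order of summation is unproblematic. This completes the proof.
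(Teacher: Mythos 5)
Your proof is correct: all three identities reduce, exactly as you show, to expanding both sides into a double sum $\sum_{a,b} f(a)g(b)w(\cdot)$ and invoking associativity of multiplication in $R$ together with finiteness of the sums. The paper states this lemma without proof, evidently regarding it as the routine verification you have carried out, so your argument simply supplies the omitted bookkeeping in the expected way.
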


\begin{lemma}
  The complex vector space $V = [\complex^R, +, 0; \complex]$ is a
  $\complex[R]$-bimodule under the left and right $\complex[R]$-ring
  multiplications
  \begin{align*}
    (f,w) &\To f \circledast' w \\
    (w,g) &\To w \circledast  g.
  \end{align*} 
\end{lemma}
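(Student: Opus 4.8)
The plan is to check the defining axioms of a $\complex[R]$-bimodule one at a time, leaning on the structure already in place: $V=[\complex^R,+,0;\complex]$ is known to be a $\complex$-vector space, and $\complex[R]$ is known to be a $\complex$-algebra with identity $\delta_1$. What then remains is (i) that the two correlation maps $\circledast'$ and $\circledast$ are $\complex$-bilinear; (ii) that $\delta_1$ acts as the identity on either side; and (iii) that the left action is associative over $\ast$, that the right action is associative over $\ast$, and that the left and right actions commute. The point is that (iii) is \emph{already done}: it is precisely the content of the preceding lemma on the interplay of convolution and correlation.

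For (i), bilinearity is immediate from the definitions $(f\circledast' w)(x)=\sum_{r\in R}f(r)w(xr)$ and $(w\circledast g)(x)=\sum_{r\in R}w(rx)g(r)$: each is a finite sum over $R$ of products in which $f$ (resp.\ $g$) and $w$ each occur linearly, so additivity in either argument and compatibility with scalars $\lambda\in\complex$ follow term by term, exactly as in the proof that $\complex[R]$ is an algebra. For (ii), a one-line computation suffices: $(\delta_1\circledast' w)(x)=\sum_{r\in R}\delta_1(r)\,w(xr)=w(x\cdot 1)=w(x)$, and likewise $(w\circledast\delta_1)(x)=\sum_{r\in R}w(rx)\,\delta_1(r)=w(1\cdot x)=w(x)$, so $\delta_1\circledast' w=w=w\circledast\delta_1$ for all $w\in\complex^R$.

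For (iii), I would simply cite the previous lemma: $(f\ast g)\circledast' w=f\circledast'(g\circledast' w)$ says the left action is associative over the multiplication of $\complex[R]$, hence makes $V$ a left $\complex[R]$-module; $w\circledast(f\ast g)=(w\circledast f)\circledast g$ makes $V$ a right $\complex[R]$-module; and $g\circledast'(w\circledast f)=(g\circledast' w)\circledast f$ is exactly the compatibility condition linking the left and right actions. Together with (i) and (ii) this gives all the bimodule axioms. There is no deep obstacle here; the only step I would treat carefully is bookkeeping the handedness of the two correlations against the multiplication $\delta_r\ast\delta_s=\delta_{rs}$ of $\complex[R]$ — note that the left action of $\delta_r$ sends $w$ to $x\mapsto w(xr)$, so that the exponents compose in the correct order inside the sums — but once the conventions are fixed this way, everything matches and the verification is routine.
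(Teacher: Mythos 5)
Your proposal is correct and follows the same route as the paper, which simply states that combining additive distributivity with the preceding lemma on convolution--correlation identities makes the result evident. You spell out the bilinearity, the $\delta_1$-identity check, and the role of each of the three identities more explicitly, but the underlying argument is identical.
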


\begin{proof}
  Combining additive distribution with the preceeding Lemma the result
  is evident.
\end{proof}

\begin{lemma}
  The set $\complex \delta_0 $ is a two-sided ideal in the algebra
  $\complex[R, \ast]$ where
  \[ \complex \delta_0  = \{ c \delta_0 \mid c \in \complex \} \:. \]
\end{lemma}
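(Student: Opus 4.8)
The plan is to verify the two defining properties of a two-sided ideal: that $\complex\delta_0$ is a $\complex$-subspace of $\complex[R]$, and that it absorbs $\ast$-multiplication by arbitrary elements on both sides. The subspace claim is immediate, since $\complex\delta_0 = \{c\delta_0 \mid c\in\complex\}$ is visibly closed under addition and scalar multiplication and contains the zero function.

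For the absorption property I would first record the elementary identity $\delta_r \ast \delta_0 = \delta_0 = \delta_0 \ast \delta_r$ for every $r\in R$, which is a special case of the already-noted rule $\delta_r\ast\delta_s = \delta_{rs}$ together with $r\cdot 0 = 0 = 0\cdot r$. Writing an arbitrary $f\in\complex[R]$ in the basis $\{\delta_r \mid r\in R\}$ as $f = \sum_{r\in R} f(r)\,\delta_r$ and using the bilinearity of $\ast$, one then obtains $f\ast(c\delta_0) = c\bigl(\sum_{r\in R}f(r)\bigr)\delta_0$ and, symmetrically, $(c\delta_0)\ast f = c\bigl(\sum_{r\in R}f(r)\bigr)\delta_0$; both lie in $\complex\delta_0$, which is all that is required.

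Alternatively, and perhaps more transparently, one can argue straight from the definition of convolution: $\bigl(f\ast(c\delta_0)\bigr)(x) = \sum_{ab=x} f(a)\,(c\delta_0)(b) = c\sum_{\{a\,\mid\, a0 = x\}} f(a)$, and since $a\cdot 0 = 0$ for all $a\in R$ this sum is empty — hence zero — unless $x = 0$, giving $f\ast(c\delta_0) = c\bigl(\sum_{a\in R}f(a)\bigr)\delta_0$; the left-hand multiplication is handled in the same way, using $0\cdot a = 0$.

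I do not anticipate any real obstacle: the statement is a routine consequence of $0$ being an absorbing element of the multiplicative monoid of $R$. The only points deserving a word of care are that $R$ need not be commutative, so both sides must be checked separately, and that the scalar coefficient $\sum_{a\in R}f(a)$ arising in the computation is merely the value of an augmentation-type linear functional and causes no difficulty.
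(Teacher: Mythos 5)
Your proof is correct; the paper states this lemma without providing any proof, and your direct computation showing $f\ast(c\delta_0) = c\bigl(\sum_{a\in R}f(a)\bigr)\delta_0 = (c\delta_0)\ast f'$-style absorption (using that $0$ is an absorbing element of the multiplicative monoid of $R$) is exactly the routine verification the authors evidently considered too elementary to record. Both of your variants — the basis expansion via $\delta_r\ast\delta_0=\delta_0$ and the direct evaluation of the convolution sum — are sound, and you are right to check both sides separately since $R$ is not assumed commutative.
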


With this two-sided ideal we can immediately form the factor algebra
$\complex[R, \ast]/ \complex \delta_0 $ which we call $\complex_0[R]$.
\begin{definition}\rm
  We define the set $V_0$ to be those functions $w$ in $V$ which
  satisfy $w(0) = 0$.
  \[ V_0 := \{ w \in V \mid w(0) = 0 \} \:. \]
\end{definition}

As $w \circledast \delta_0 = 0$ for all $w \in V_0$ this induces a
natural right action of $\complex_0[R]$ on $V_0$ by
\[  w \circledast (f + \complex \delta_0 )  := w \circledast f \:, \]
where $g = f + \complex \delta_0 $ is any element of
$\complex_0[R]$ and $w \in V_0$. Similarly there exists a left
action via~$\circledast'$.


\section{Direct Product of Chain Rings}%
\label{sec:dirprod}

From now on let the ring $R$ be a finite product of finite chain rings
$R_i$, say $R = R_1 \times R_2 \times \dots \times R_r$, with Jacobson
radicals generated by $p_1, p_2, \dots, p_r$ of nilpotency $d_1, d_2,
\dots, d_r$ respectively.  We view elements of $R$ as $r$-tuples of
chain ring elements i.e.\ $a\in R$ represented as $a = (a_1, a_2,
\dots, a_r)$ where each $a_i\in R_i$.  Operations, including
multiplication, are performed component-wise.  The set of generators
of the ideals of $R$ is given by $\{R^{\times}e \mid e \in E \}$ where
$E$ are the representatives
\[ E = \{ {p_1}^{e_1} {p_2}^{e_2} \dots {p_r}^{e_r} = 
e \mid 0 \leq e_i \leq d_i \} \:. \]
The lattice of principal left ideals of $R$ may be described by
$E(_RR) = {\{ Re \mid e \in E \}}$.

We have for $e = p_1^{e_1}\dots p_r^{e_r}, f = p_1^{f_1}\dots
p_r^{f_r}\in E$ the relations $e_i\le f_i\ \forall i$ if and only if
$Re\ge Rf$, and in this case we write $e\ge f$.  The {\em socle} of
any $R$-module ${}_RM$ is the sum of the minimal submodules of
${}_RM$. When ${}_RM$ is the ring as a left module over itself this is
the sum of the minimal left ideals.  Here the representative of the
socle is $s = {p_1}^{d_1-1}{p_2}^{d_2-1} \dots{p_r}^{d_r-1}$ by the
nature of the direct product.

Let us take a look at how the M\"{o}bius function behaves on the
partially ordered set of principal ideals. For a chain ring $T$, where
$\pi$ generates the radical with nilpotency index $h$, the function is
described by:
\[ \mu(T \pi ^x, T \pi ^y) =  \left\{
  \begin{array}{lcl}
    1  & \ :\  &  x = y \\
    -1  & : &   x = y + 1 \\
    0 & : &  x > y + 1 \:. 
  \end{array}
\right. \]
Translation invariance in the lattice of principal ideals of a direct
product of finite chain rings means we are only interested in the
values of the M\"{o}bius function takes within the socle. The nature
of the lattice, combined with binomial theorem arguments, allows us to
determine those values we will be interested in.

\begin{lemma}\label{mobiuslem}
  The M\"{o}bius function takes values for all $e = p_1^{e_1}\dots
  p_r^{e_r}\in E$,
  \[ \mu(0,Re) =  \left\{
    \begin{array}{lcl}
      (-1)^{\Sigma{(d_{i}- e_{i})}}  & \ :\  & Re \leq \rm{Soc}(R)\\
      0 & : &  Re \nleqslant \rm{Soc}(R) \:.
    \end{array}
  \right. \]
\end{lemma}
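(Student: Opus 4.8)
The plan is to exploit the product structure of the lattice of principal ideals. Since $R = R_1 \times \dots \times R_r$, every principal left ideal decomposes as $Re = R_1 p_1^{e_1} \times \dots \times R_r p_r^{e_r}$, so the poset $E({}_RR)$ is the direct product of the $r$ chains $E({}_{R_i}R_i) \cong \{0, 1, \dots, d_i\}$ (ordered by reverse divisibility). A standard property of the M\"obius function is that it is multiplicative over direct products of posets: $\mu_{P \times Q}\big((p,q),(p',q')\big) = \mu_P(p,p')\,\mu_Q(q,q')$. I would first record this fact (it follows directly from Definition~\ref{mobiusdefn}(i), since the defining recursion for $\mu_{P\times Q}$ is satisfied by the product of the coordinate M\"obius functions, using that sums over the product interval factor as products of sums). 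Thus
\[ \mu(0, Re) \;=\; \prod_{i=1}^{r} \mu_{R_i}\big(R_i p_i^{d_i}, R_i p_i^{e_i}\big)\,, \]
where I use that the bottom element $0$ of $E({}_RR)$ corresponds to $(d_1, \dots, d_r)$, i.e.\ $0 = p_1^{d_1}\cdots p_r^{d_r}$ (adopting the convention $p_i^{d_i} = 0$).

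Next I would invoke the explicit formula for the M\"obius function on a single chain ring $T$ with radical generator $\pi$ of nilpotency $h$, which is already displayed in the excerpt: $\mu(T\pi^x, T\pi^y)$ equals $1$ if $x = y$, equals $-1$ if $x = y+1$, and equals $0$ if $x > y+1$. Applying this with $x = d_i$ and $y = e_i$: the $i$-th factor is $1$ when $e_i = d_i$, is $-1$ when $e_i = d_i - 1$, and is $0$ when $e_i \le d_i - 2$. Hence the product is nonzero precisely when $e_i \in \{d_i - 1, d_i\}$ for every $i$, and in that case it equals $(-1)^{\#\{i : e_i = d_i - 1\}}$.

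Finally I would translate the surviving case into the stated form. The condition $e_i \in \{d_i-1, d_i\}$ for all $i$ is exactly the condition $Re \le \mathrm{Soc}(R)$: the socle has representative $s = p_1^{d_1-1}\cdots p_r^{d_r-1}$, and $Re \le Rs$ iff $e_i \ge s_i = d_i - 1$ for all $i$ (recall $e \ge f$ means $e_i \le f_i$, so $Re \le Rs$ means $e_i \ge s_i$), i.e.\ $d_i - 1 \le e_i \le d_i$. Whenever this holds, $\sum_i (d_i - e_i) = \#\{i : e_i = d_i - 1\}$, so $(-1)^{\sum_i (d_i - e_i)}$ is precisely the sign computed above; and if it fails, some factor vanishes and $\mu(0, Re) = 0$. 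This gives the two cases of the Lemma.

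The only real obstacle is establishing multiplicativity of the M\"obius function over a product of posets cleanly; this is classical (see \cite{roman92}) and amounts to checking that the candidate product function satisfies the characterising recursion of Definition~\ref{mobiusdefn}, using that an interval $[(p,q),(p',q')]$ in $P \times Q$ factors as $[p,p'] \times [q,q']$ and that the double sum over such a rectangle factors. Everything after that is a direct substitution into the known single-chain formula, plus the bookkeeping identifying the nonvanishing locus with the socle — both routine given the poset description already set up in Section~\ref{sec:dirprod}.
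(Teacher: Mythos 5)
Your proof is correct. Note that the paper itself offers no written proof of this lemma --- it only remarks beforehand that ``the nature of the lattice, combined with binomial theorem arguments, allows us to determine those values.'' Your route via the multiplicativity of the M\"obius function over a direct product of posets is the cleanest standard packaging of exactly that idea: the lattice of principal left ideals of $R=R_1\times\dots\times R_r$ is the product of the $r$ chains of ideals of the $R_i$, each factor contributes $1$, $-1$, or $0$ according as $e_i=d_i$, $e_i=d_i-1$, or $e_i\le d_i-2$, and the nonvanishing locus is precisely the interval below the socle representative $s=p_1^{d_1-1}\cdots p_r^{d_r-1}$. The ``binomial theorem'' phrasing the authors allude to is the direct verification of the same fact: when $Re\le\mathrm{Soc}(R)$ the interval $[0,Re]$ is Boolean of rank $n=\sum_i(d_i-e_i)$ and the recursion of Definition~\ref{mobiusdefn} is solved by $(-1)^n$ because $\sum_k\binom{n}{k}(-1)^k=0$, while the vanishing above the socle follows by induction up the lattice. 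The two arguments are essentially the same computation; yours has the advantage of quoting a classical product formula (which you correctly note must be justified from the defining recursion, e.g.\ as in \cite{roman92}), whereas the binomial-coefficient count avoids invoking any general poset machinery. Your bookkeeping --- the identification of $0$ with $p_1^{d_1}\cdots p_r^{d_r}$, the reversal of the order ($Re\le Rs$ iff $e_i\ge d_i-1$), and the equality $\sum_i(d_i-e_i)=\#\{i: e_i=d_i-1\}$ on the surviving locus --- is all accurate.
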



\section{MacWilliams' Extension Theorem by Module Generation}%
\label{sec:modgen}

For any functions $f, g \in \complex[R]$ note that ${\rm Sym}_\ell(f
\ast g) \supseteq {\rm Sym}_\ell(f)$ and in a similar line we have
${\rm Sym}_r(f \ast g) \supseteq {\rm Sym}_r(g)$.

\begin{lemma}
  Symmetry groups are inherited as follows for correlation
  \begin{align*}
    {\rm Sym}_\ell (w \circledast g) &\supseteq {\rm Sym}_r (g) \\
    {\rm Sym}_r (f \circledast ' w) &\supseteq {\rm Sym}_\ell (f) \:.
  \end{align*}
\end{lemma}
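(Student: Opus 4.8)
The plan is to establish each of the two inclusions by a single re-indexing of the defining sum, which transfers the offending unit from the argument of $w$ onto the argument of $g$ (respectively $f$), where the symmetry hypothesis permits deleting it. The two statements are left--right mirror images, so I would carry out the first in detail and deduce the second by the analogous dual computation.

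\textbf{First inclusion.} Fix $u \in {\rm Sym}_r(g)$, so $g(ru) = g(r)$ for every $r \in R$; applying this identity with $r$ replaced by $ru^{-1}$ (legitimate because $u \in R^\times$) also gives $g(ru^{-1}) = g(r)$ for every $r$. For arbitrary $x \in R$ I would then write
\[
  (w \circledast g)(ux) \;=\; \sum_{r \in R} w\bigl((ru)x\bigr)\,g(r),
\]
and substitute $s := ru$. Since right multiplication by the unit $u$ permutes $R$, this substitution loses no terms and yields $\sum_{s \in R} w(sx)\,g(su^{-1})$, which by the previous sentence equals $\sum_{s \in R} w(sx)\,g(s) = (w \circledast g)(x)$. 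Hence $u \in {\rm Sym}_\ell(w \circledast g)$.

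\textbf{Second inclusion.} Fix $u \in {\rm Sym}_\ell(f)$, so $f(ur) = f(r)$ for all $r$, and therefore $f(u^{-1}s) = f(s)$ for all $s$. Expanding
\[
  (f \circledast' w)(xu) \;=\; \sum_{r \in R} f(r)\,w\bigl(x(ur)\bigr)
\]
and substituting $s := ur$ (again a bijection of $R$) turns the sum into $\sum_{s \in R} f(u^{-1}s)\,w(xs) = \sum_{s \in R} f(s)\,w(xs) = (f \circledast' w)(x)$, so $u \in {\rm Sym}_r(f \circledast' w)$.

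There is no genuine obstacle here; the only points that demand attention are choosing the substitution on the correct side, so that the unit lands next to $g$ (respectively $f$) on the very side that ${\rm Sym}_r$ (respectively ${\rm Sym}_\ell$) controls, and noting that invertibility of $u$ makes each re-indexing a bijection of $R$ — in particular the summand at $r = 0$ is accounted for, so the full sum over $R$ is preserved. Implicitly this also uses that the symmetry groups are closed under inverses, which is immediate from their description as subgroups of $R^\times$.
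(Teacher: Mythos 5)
Your proof is correct and follows essentially the same route as the paper, which establishes these inclusions by the identical change of variable $s = ru$ (resp.\ $s = ur$) in the correlation sum and then absorbs the unit into the argument of $g$ (resp.\ $f$) using the symmetry hypothesis. The paper only sketches this computation (in the illustration following the subsequent lemma), whereas you have written it out carefully, including the small but necessary observation that $g(ru)=g(r)$ for all $r$ also yields $g(ru^{-1})=g(r)$.
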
  

\begin{lemma}
  Define $S = \{ f \in \complex_0[R] \mid f(x u) = f(x)\ \forall
  x \in R, u \in R^{\times} \}$ and let the invariant weights from
  Section~\ref{sec:weightdefs} be denoted by $W = \{ w \in V_0 \mid
  {\rm Sym}_\ell (w) = R^{\times} = {\rm Sym}_r (w) \}$.  Then $W$
  is a right $S$-module under correlation $\circledast$ in a naturally
  inherited way.
\end{lemma}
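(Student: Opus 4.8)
The plan is to verify two things: first, that $W$ is closed under the right action $\circledast$ of $S$ (more precisely of its image in $\complex_0[R]$), and second, that the module axioms hold, which will follow essentially for free from the bimodule structure already established.

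First I would check closure. Take $w \in W$ and $g \in S$; I want to show $w \circledast g \in W$, i.e.\ that $w \circledast g \in V_0$ and that both symmetry groups are all of $R^\times$. Membership in $V_0$ is immediate since $(w \circledast g)(0) = \sum_{r} w(0) g(r) = 0$ because $w(0)=0$. For the left symmetry, the previous lemma gives ${\rm Sym}_\ell(w \circledast g) \supseteq {\rm Sym}_r(g)$, and since $g \in S$ means precisely $g(xu)=g(x)$ for all units $u$, we have ${\rm Sym}_r(g) = R^\times$, hence ${\rm Sym}_\ell(w \circledast g) = R^\times$. The right symmetry is the step needing a small argument: one computes, for $u \in R^\times$,
\[ (w \circledast g)(xu) = \sum_{r \in R} w(rxu)\, g(r) = \sum_{r \in R} w(rx)\, g(r) = (w \circledast g)(x), \]
where the middle equality uses $w \in W$, so ${\rm Sym}_r(w) = R^\times$ and $w(rxu)=w(rx)$ for every $r$. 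Thus ${\rm Sym}_r(w \circledast g) = R^\times$ and $w \circledast g \in W$.

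Next I would observe that the action is well-defined on $\complex_0[R] = \complex[R,\ast]/\complex\delta_0$: as noted in the text, $w \circledast \delta_0 = 0$ for $w \in V_0$, so the formula $w \circledast (f + \complex\delta_0) := w \circledast f$ does not depend on the representative. Restricting this action of $\complex_0[R]$ to the subalgebra (or subspace) $S$ and noting $W \subseteq V_0$, the module axioms --- biadditivity, $\complex$-linearity, the associativity $w \circledast (f \ast g) = (w \circledast f) \circledast g$, and the unit law with $\delta_1 + \complex\delta_0$ (noting $\delta_1 \in S$) --- are all inherited directly from the $\complex[R]$-bimodule structure on $V$ established earlier, combined with the closure just proved. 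It remains only to confirm that $S$ is closed under $\ast$ so that the phrase ``$S$-module'' makes sense; but $f, g \in S$ gives ${\rm Sym}_r(f \ast g) \supseteq {\rm Sym}_r(g) = R^\times$ and, since $f \ast g \in \complex_0[R]$, also $(f \ast g)(0) = 0$, so $f \ast g \in S$.

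The main obstacle, such as it is, is purely bookkeeping: one must be careful that $S$ is being regarded inside $\complex_0[R]$ rather than $\complex[R]$ (the quotient by $\complex\delta_0$ is what makes the right action on $V_0$ well-defined), and one must pick the correct one of the two inheritance lemmas --- the right action $\circledast$ pairs with ${\rm Sym}_r(g)$ on the algebra side to produce constraints on ${\rm Sym}_\ell$ of the result, so the non-trivial invariance (the right symmetry of $w \circledast g$) has to be extracted by the direct computation above using the invariance of $w$ itself. No deep input is needed beyond the lemmas already in hand.
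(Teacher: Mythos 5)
Your argument follows the same route as the paper's: the paper's ``proof'' is the illustration immediately after the lemma statement, which checks $(w\circledast f)(ux)=(w\circledast f)(x)$ via the substitution $s=ru$ and the right invariance of $f$, and $(w\circledast f)(xu)=(w\circledast f)(x)$ via the right invariance of $w$ --- exactly your two closure computations (your left-symmetry step just cites the inheritance lemma instead of redoing the substitution). Your extra bookkeeping --- membership in $V_0$, well-definedness modulo $\complex\delta_0$, closure of $S$ under $\ast$ --- is correct and goes a little beyond what the paper writes down. One slip in the side remarks: $\delta_1\notin S$ in general, since for a unit $u\neq 1$ one has $\delta_1(1\cdot u)=0\neq 1=\delta_1(1)$; the identity element of $S$ is rather $\varepsilon_1=\frac{1}{\abs{R^\times}}\,\delta_{R^\times}$ (in the paper's later notation), and the unit law $w\circledast\varepsilon_1=w$ holds because $w\in W$ is \emph{left} invariant. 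This does not affect the substance of the lemma, whose content is the closure you establish correctly.
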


We illustrate this by considering the correlation $w \circledast f$ at
$ux$ and $xu$.
\begin{align*}
  w \circledast f (ux) &= \sum_{r \in R}{w (rux) f(r)} \\
  &= \sum_{s \in R}{w (sx) f(s u^{-1})}
\end{align*}
When $f$ is right invariant this will be simply $w \circledast f
(x)$. Now \[ w \circledast f (xu) = \sum_{r \in R}{ w(rxu)f(r)} \]
which will be $w \circledast f (x)$ when $w$ is right invariant. Hence
the correlation is in $W$ when $w \in W$ and $f \in S$.
 
We re-examine the Extension Theorem with this new perspective. We aim
to classify all weights that generate $W$ as a right $S$-module. This
will then yield MacWilliams' Equivalence Theorem for these weights due
to the following results, equivalent to those in \cite{greferath05}.

\begin{lemma}\label{lemiso}
  If $\phi$ is a $w$-isometry then $\phi$ is a $(w \circledast
  s)$-isometry for all $s \in S$.
\end{lemma}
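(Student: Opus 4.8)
The plan is to reduce the statement for codes of length $n$ to the defining property of the correlation, working coordinate-by-coordinate. Let $\phi : C \to {}_RR^n$ be a $w$-isometry, so that $w(\phi(x)) = w(x)$ for every $x \in C$, where $w$ is extended additively to $R^n$. Fix $s \in S$. I want to show $(w \circledast s)(\phi(x)) = (w \circledast s)(x)$ for all $x \in C$. The first step is to unwind the left-hand side using the additive extension of $w \circledast s$ to $R^n$: writing $\phi(x) = (\phi(x)_1, \dots, \phi(x)_n)$, we have
\[
  (w \circledast s)(\phi(x)) \;=\; \sum_{j=1}^{n} (w \circledast s)(\phi(x)_j)
  \;=\; \sum_{j=1}^{n} \sum_{r \in R} w\bigl(r\,\phi(x)_j\bigr)\, s(r) \:.
\]

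The second step is to interchange the order of summation and recognise the inner object as an evaluation of $w$ on a scaled codeword. For a fixed $r \in R$, consider the map $x \mapsto r \cdot \phi(x)$, meaning $(r\phi(x)_1, \dots, r\phi(x)_n)$; since $\phi$ is $R$-linear and scalar multiplication by $r$ is an $R$-module endomorphism of ${}_RR^n$, this is again $R$-linear. The key observation is that $w$ applied to this tuple is $\sum_{j} w(r\,\phi(x)_j)$, and I would like to say this equals $w(r\cdot x) := \sum_j w(r\, x_j)$. This is \emph{not} immediate from $\phi$ being a $w$-isometry, because $\phi$ preserves $w$, not the $w$-value of $r$-multiples. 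The resolution — and the heart of the argument — is that $x \mapsto r\phi(x)$ and $x \mapsto rx$ agree up to the isometry $\phi$ only after one checks that multiplication by $r$ commutes appropriately; more precisely, one uses that $\phi$ restricted to $C$ is a $w$-isometry together with the fact that $rC$ is itself a linear code and $\phi$ induces a well-defined $w$-isometry on it. Concretely, if $\psi : rC \to {}_RR^n$ is obtained by mapping $rx \mapsto r\phi(x)$ (well-defined when $\phi$ is injective, which the last remark before Section~\ref{sec:convcorr} guarantees under the Extension Theorem hypotheses, but here we only need the identity of weights), then $w(r\phi(x)) = w(rx)$ follows from applying the isometry hypothesis in the form $w(\phi(y)) = w(y)$ with $y$ ranging suitably. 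After this identification one obtains
\[
  (w \circledast s)(\phi(x)) \;=\; \sum_{r \in R} w(r x)\, s(r) \;=\; (w \circledast s)(x) \:.
\]

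The step I expect to be the main obstacle is precisely the identification $w(r\phi(x)) = w(rx)$ for each fixed $r$: one must argue that the $w$-isometry property of $\phi$ transfers to the "$r$-scaled" picture. The cleanest route is to avoid invoking injectivity: observe that $r\phi(x) = \phi(rx)$ by $R$-linearity of $\phi$, so $w(r\phi(x)) = w(\phi(rx)) = w(rx)$ directly, since $rx \in C$ as $C$ is a submodule. This collapses the obstacle entirely, and the only remaining care is the legitimacy of interchanging the finite sums over $j$ and over $r$, which is routine. Thus the proof is: expand $(w\circledast s)\circ\phi$ additively, swap sums, use $r\phi(x)=\phi(rx)$ and the isometry hypothesis, then recollect the sum as $(w\circledast s)(x)$.
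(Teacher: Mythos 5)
Your proof is correct and, once you discard the detour about transferring the isometry to $rC$, it is exactly the paper's argument: expand $(w\circledast s)(\phi(x))$, use $r\phi(x)=\phi(rx)$ by $R$-linearity together with $rx\in C$, apply $w(\phi(rx))=w(rx)$, and recollect. The coordinate-wise unwinding of the additive extension to $R^n$ is just a more explicit rendering of the same computation the paper writes in one line.
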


\begin{proof} 
  Let $\phi$ be a $w$-isometry. Then $w(\phi(x)) = w(x)$. Examine $(w
  \circledast s)(\phi(x))$:
  \begin{align*}
    w \circledast s (\phi(x)) &= \sum_{r \in R}{w (r\phi(x)) s(r)} \\
    &= \sum_{r \in R}{w (\phi(rx)) s(r)} \\
    &= \sum_{r \in R}{w (rx) s(r)} 
  \end{align*}
  which is $w \circledast s (x)$ and thus $\phi$ is a $(w \circledast
  s)$-isometry.
\end{proof}

\begin{remark}
  Let $R$ be a Frobenius ring.  If $w \circledast S = W$ then $w
  \circledast h = w_{H}$ for some $h \in S$ where $w_{H}$ denotes the
  Hamming weight. Since every $w$-isometry is a $(w \circledast
  h)$-isometry, by Lemma~\ref{lemiso}, we have that MacWilliams'
  Extension Theorem holds for $w$.
\end{remark}

Continuing with our notation for $R$ as before we define the natural
basis for~$S$.
\begin{definition}
  For each $e \in E$ define the basis element
  \[ \varepsilon_e := 
  \ \frac{1}{\abs{R^{\times}e}} \sum_{a \in R^{\times}e}{\delta_{a}} = 
  \ \frac{1}{\abs{R^{\times}e}} \ \delta_{R^{\times}e} \:. \]
\end{definition}

By abuse of notation for all $e \in E$ we denote by $e^{\bot}$ the
orthogonal ideal to $Re$, namely $e^{\bot} = (Re)^{\bot} = \set{r \in
  R \mid rs = 0\ \forall s \in Re}$. Note that when $e = p_1^{e_1}\dots
p_r^{e_r}$ and $e^{\bot} = p_1^{e_1^{\bot}}\dots p_r^{e_r^{\bot}}$
then $e_i^{\bot} = d_i-e_i$.
We define the set $\{ \eta_{x} \mid x\in E\setminus\{0\} \}$ in $S$
by \[ \eta_x := \sum_{x^{\bot} \le t } {\mu (0,Rxt) \ \varepsilon_t } \:, \]
where $\mu$ is the M\"{o}bius function induced by the lattice of left
principal ideals under the partial order of inclusion. The values are
as given in Lemma \ref{mobiuslem}.  Since $\mu(0, Rz)=0$ for $Rz
\nleqslant Soc(R)$ we need only include those $z = xt$ with indices
$z_i = d_i$ or $z_i = d_i -1$ in the sum.

\begin{lemma}
  The set $\{ \eta_x \mid x\in E\setminus\{0\} \}$ as defined above
  is a basis of $S$.
\end{lemma}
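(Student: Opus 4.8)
The plan is to show that the $|E\setminus\{0\}|$ elements $\eta_x$ are linearly independent, which suffices since $S$ has dimension $|E\setminus\{0\}|$ (the $\varepsilon_e$ with $e\in E$ form a basis of $\complex_0[R]$-indexed invariant functions, but $\varepsilon_0 = \delta_0$ is killed in $\complex_0[R]$, leaving the $\varepsilon_e$ for $e\in E\setminus\{0\}$ as a basis of $S$). So the task reduces to analysing the change-of-basis matrix $M = (\mu(0,Rxt))_{x,t}$ expressing each $\eta_x$ in the $\varepsilon_t$-basis and showing it is invertible.

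The key observation is to order $E\setminus\{0\}$ suitably and read off a triangular structure. By Lemma~\ref{mobiuslem}, $\mu(0,Rxt)$ is nonzero only when $Rxt \le \mathrm{Soc}(R)$, i.e.\ only when $(x t)_i \in \{d_i-1, d_i\}$ for every $i$; moreover $xt$ makes sense multiplicatively as $p_i^{x_i+t_i}$ in each coordinate (capped at $d_i$). Given $x = p_1^{x_1}\cdots p_r^{x_r}$, the constraint $x^\bot \le t$ means $t_i \le d_i - x_i$ for all $i$, and the summands that survive are exactly those $t$ with $t_i \in \{d_i - x_i - 1, d_i - x_i\}$ in each coordinate (when $x_i < d_i$; and $t_i = 0$ forced when $x_i = d_i$ is excluded since $x\neq 0$ but some coordinates may be maximal). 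In particular, among all surviving $t$, the componentwise-largest is $t = x^\bot$ itself, for which $xt = x\cdot x^\bot = s$ is the socle representative and $\mu(0, Rs) = \mu(0,\mathrm{Soc}(R)) = (-1)^{\sum 0} = 1 \ne 0$. This pins down a distinguished "leading term" $\varepsilon_{x^\bot}$ in each $\eta_x$.

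Therefore I would order $E\setminus\{0\}$ by any linear extension of the partial order "$t \le t'$" (so that $x \mapsto x^\bot$ is an order-reversing bijection of $E$) and index the rows of $M$ by $x$ and columns by $t$, sorting rows by $x$ increasing and columns by $t$ decreasing. Then $M$ becomes (upper- or lower-) triangular: the entry $M_{x,t}$ vanishes unless $t \le x^\bot$, and on the diagonal $M_{x,x^\bot} = \mu(0, Rs) = 1$. A triangular matrix with all $1$'s on the diagonal is invertible, so the $\eta_x$ span $S$ and, being $|E\setminus\{0\}|$ in number, form a basis.

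The main obstacle is bookkeeping the multiplicative indexing carefully: in each chain-ring coordinate, $p_i^{x_i} p_i^{t_i} = p_i^{\min(x_i+t_i,\,d_i)}$ — products saturate at the zero ideal — so the "leading term" claim must be checked coordinatewise, verifying that $x^\bot$ is genuinely the maximal $t$ (in the product order) with $x^\bot \le t$ and $\mu(0,Rxt)\ne 0$, and that $xt^\bot$ reaching the socle forces $t = x^\bot$ on the diagonal. Once the coordinatewise analysis is done and one confirms $(xt)_i = d_i-1$ or $d_i$ is equivalent to $t_i \ge d_i - x_i - 1$ together with the ceiling at $d_i - x_i$, the triangularity and the unit diagonal are immediate, and the Möbius-inversion identities of Definition~\ref{mobiusdefn} guarantee the consistency of the definition of $\eta_x$ in the first place.
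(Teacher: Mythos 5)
Your architecture is the same as the paper's: write each $\eta_x$ in the $\varepsilon_t$-basis, match row $x$ with column $x^\bot$, and argue that the resulting coefficient matrix is triangular with nonzero diagonal. But the justification you give for the diagonal entry is wrong as stated. Since $(x^\bot)_i = d_i - x_i$, the product $x\,x^\bot$ has $i$-th component $p_i^{\min(x_i+(d_i-x_i),\,d_i)} = p_i^{d_i} = 0$, so $x\,x^\bot = 0$ and $Rxx^\bot = \{0\}$, \emph{not} the socle representative $s = p_1^{d_1-1}\cdots p_r^{d_r-1}$; your own saturation rule already forces this. Compounding the slip, $\mu(0,{\rm Soc}(R)) = (-1)^{\sum(d_i-(d_i-1))} = (-1)^r$, whereas the value $(-1)^{\sum 0}=1$ you compute is $\mu(0,Re)$ for $e=0$. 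The two errors cancel: the diagonal entry really is $\mu(0,R\cdot 0) = \mu(\{0\},\{0\}) = 1 \neq 0$, which is what the paper's phrase ``nonzero by orthogonality'' refers to. So your conclusion survives, but the computation supporting the single most important entry of the matrix is incorrect as written.

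There is a second, subtler point, which the paper's own proof also leaves implicit but which becomes visible precisely because you (correctly) work with the reduced index set $E\setminus\{0\}$: the involution $x\mapsto x^\bot$ carries $E\setminus\{0\}$ onto $E\setminus\{1\}$, not onto itself. Hence the row of $\eta_1$ has its ``diagonal'' position at the deleted column $t=0$ (where $\varepsilon_0=\delta_0\equiv 0$ in $\complex_0[R]$), and the $(|E|-1)\times(|E|-1)$ matrix is not triangular under your proposed ordering --- already for $R=\zint_4$ it is anti-triangular rather than triangular. A clean repair is to note that the full $E\times E$ matrix is a tensor product over the chain-ring factors $R_i$, so the relevant cofactor $(M^{-1})_{00}$ factors as $\prod_i\bigl((M^{(i)})^{-1}\bigr)_{00}$, and for a single chain ring the reduced matrix is anti-triangular with anti-diagonal entries $\mu(0,{\rm Soc}(R_i))=-1$. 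In summary: same route as the paper, but the diagonal computation needs correcting and the row $x=1$ needs separate care.
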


\begin{proof}
  Define the indicator function
  \[ \mathbf{1}_{a \leq b} =  \left\{
    \begin{array}{lcl}
      1  & \ :\  & Ra \leq Rb\\
      0 & : &  Ra \nleqslant Rb \:.
    \end{array}
  \right. \] The matrix $(\mathbf{1}_{a \leq b} )_{a, b \in E}$ will
  be upper triangular and invertible with respect to the usual rank
  ordering.  Thus the matrix given by \[ \left( \mu (0, Ra^{\bot} b)
    \mathbf{1}_{a \leq b} \right)_{a,b \in E} \] will be invertible if
  and only if $\mu (0, Ra^{\bot} b)$ is nonzero when $b=a$. By
  applying the permutation $a \mapsto a^{\bot}$ we acquire an
  equivalent statement: \[ \left( \mu (0, Ra b) \mathbf{1}_{a^{\bot}
      \leq b} \right)_{a,b \in E} \mbox{ is invertible } \quad
  \Leftrightarrow \quad \mu (0, Ra a^{\bot}) \neq 0 \] which is true
  by orthogonality. As this matrix describing the transform from
  $\set{\varepsilon_{e}}$ to $\set{\eta_{x}}$ is invertible it is
  clear that the $\set{\eta_{x}}$ form a basis of $S$.
 \end{proof}

We examine the action of correlation on this basis of $S$. 

\begin{proposition}
  \[ ( w \circledast \eta_{x})(y) =  \left\{
    \begin{array}{lcl}
      \sum_{x^{\bot} \leq t}{\mu(0,Rxt) w(ty)} & : & Rx \leq Ry\\
      0 & \;\;:\;\; & Rx \nleqslant Ry \:.
    \end{array}
  \right. \]
\end{proposition}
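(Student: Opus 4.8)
The plan is to unwind the correlation and the definition of $\eta_x$ so as to obtain the first branch as an identity valid for \emph{every} $y$, and only then to prove that the resulting expression vanishes exactly when $Rx\nleqslant Ry$. Concretely, I would start from
\[ (w\circledast\eta_x)(y) \;=\; \sum_{r\in R} w(ry)\,\eta_x(r) \;=\; \sum_{x^{\bot}\le t}\mu(0,Rxt)\sum_{r\in R} w(ry)\,\varepsilon_t(r) \]
and evaluate the inner sum. Since $\varepsilon_t=\tfrac1{\abs{R^{\times}t}}\,\delta_{R^{\times}t}$, the inner sum equals $\tfrac1{\abs{R^{\times}t}}\sum_{r\in R^{\times}t}w(ry)$; writing a typical $r\in R^{\times}t$ as $r=ut$ with $u\in R^{\times}$ and using ${\rm Sym}_\ell(w)=R^{\times}$ gives $w(ry)=w\bigl(u(ty)\bigr)=w(ty)$, so the inner sum collapses to $w(ty)$. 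Hence
\[ (w\circledast\eta_x)(y) \;=\; \sum_{x^{\bot}\le t}\mu(0,Rxt)\,w(ty)\,, \]
which is exactly the first branch. So the only real task is to show this is $0$ when $Rx\nleqslant Ry$.

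\emph{Restricting the index set.} The condition $x^{\bot}\le t$ reads $t_i\le d_i-x_i$ for all $i$, while by Lemma~\ref{mobiuslem} the coefficient $\mu(0,Rxt)$ is nonzero only when $Rxt\le\mathrm{Soc}(R)$, i.e.\ $x_i+t_i\ge d_i-1$ for all $i$. Together these confine each $t_i$ to the pair $\{\,d_i-x_i,\ d_i-x_i-1\,\}$ (and to $0$ when $x_i=d_i$). Thus the surviving $t$ are parametrised by subsets $J$ of $I:=\{\,i\mid x_i<d_i\,\}$ via $t(J)_i=d_i-x_i-\mathbf{1}_{i\in J}$, and for such $t$ Lemma~\ref{mobiuslem} gives $\mu(0,Rxt)=(-1)^{\abs J}$. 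Therefore
\[ (w\circledast\eta_x)(y) \;=\; \sum_{J\subseteq I}(-1)^{\abs J}\,w\bigl(t(J)\,y\bigr)\,. \]

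\emph{A cancelling involution.} Suppose $Rx\nleqslant Ry$, so that $x_j<y_j$ for some $j$; since $y_j\le d_j$ this forces $x_j<d_j$, i.e.\ $j\in I$. I would pair each $J\subseteq I$ with $J\mathbin{\triangle}\{j\}$: this is a fixed-point-free involution of the index set that reverses the sign $(-1)^{\abs J}$. The key point is that it does \emph{not} change $w\bigl(t(J)y\bigr)$: the tuples $t(J)$ and $t(J\mathbin{\triangle}\{j\})$ agree in every coordinate $i\ne j$, while in coordinate $j$ one has $\bigl(t(J)y\bigr)_j=(d_j-x_j-\mathbf{1}_{j\in J})+y_j\ge d_j$ in both cases, because $y_j-x_j\ge 1$; hence the $j$-th component of $t(J)y$ is $0$ in $R_j$ regardless of $J$, and $t(J)y=t(J\mathbin{\triangle}\{j\})y$ as elements of $R$. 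So the paired summands cancel and the total sum is $0$, giving the second branch.

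The main obstacle is the last step: the cancellation rests on the observation that, in a finite product of chain rings, pushing an exponent up to the nilpotency index annihilates that component. Isolating this collapse is exactly what the socle computation of Lemma~\ref{mobiuslem} is for, and it is here that the direct-product hypothesis on $R$ is genuinely used; the first two steps are routine bookkeeping with the definition of $\eta_x$, the averaging property of $\varepsilon_t$, and the M\"obius values.
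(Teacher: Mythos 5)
Your proposal is correct and follows essentially the same route as the paper: expand the correlation, collapse the inner sum over $R^{\times}t$ to $w(ty)$ via left invariance, restrict $t$ to the socle condition $t_i\in\{d_i-x_i,\,d_i-x_i-1\}$, and cancel in pairs using the fact that when $y_k>x_k$ the $k$-th component of $ty$ is already annihilated, so $w(tp_ky)=w(ty)$ while $\mu(0,Rxt)+\mu(0,Rxtp_k)=0$. Your packaging of this cancellation as a sign-reversing involution $J\mapsto J\mathbin{\triangle}\{j\}$ is just a notational variant of the paper's splitting of the sum over the two values of $t_k$.
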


\begin{proof}
  First we expand the correlation to the formula above (with which we
  will examine the case when $Rx \nleqslant Ry$).
  \begin{align*}
    w \circledast \eta_x (y) &= \sum_{r \in R}{ w(ry) \eta_x (r) } \\
    &= \sum_{r \in R}{ w(ry) \sum_{x^{\bot} \leq t }{\mu (0, Rxt) \
        \varepsilon_t (r) }}\\
    &= \sum_{x^{\bot} \leq t}{ \mu(0, Rxt) \sum_{r \in R}{ w(ry) \
        \varepsilon_t (r) }}
  \end{align*}
  The second sum will be nonzero only for those $r$ in $R^{\times}t$
  each of which will contribute $ \frac{1}{\abs{R^{\times} t}} w (t
  y)$. This yields the desired description.

  We use the notation $x = p_1^{x_1}\dots p_r^{x_r}$, $y =
  p_1^{y_1}\dots p_r^{y_r}$, $t = p_1^{t_1}\dots p_r^{t_r}$,
  etc. Suppose $Rx \nleqslant Ry$, so there exists a $k$ such that
  $y_{k} > x_{k}$ and it follows $x_{k} < d_{k}$. Take $t$ in the sum
  above, $Rx^{\bot} \leq Rt$ and $Rxt \leq $ Soc$(R)$. The
  implications are $d_{i} - x_{i} \geq t_{i}$ and $x_{i} + t_{i} \geq
  d_{i} -1$. Hence either $t_{k} = d_{k} -x_{k}$ or $t_{k} = d_{k} -
  x_{k} -1$. This implies $(ty)_{k} = t_{k} + y_{k} \geq d_{k}$ since
  $y_{k} - x_{k} -1 \geq 0$. Thus $w(ty)$ will be the same for either
  option of $t_{k}$, namely $w(tp_{k} y) = w(ty)$ when $t_{k} = d_{k}
  - x_{k} -1$.
  We divide the sum into two parts, splitting over the value of
  $t_{k}$.
  \begin{align*}
    \sum_{x^{\bot} \leq t}{ \mu (0, Rxt) w (t y)} 
    &= \sum_{\upontop{x^{\bot} \leq t}{t_{k}=d_{k} - x_{k} }}{ \mu (0, Rxt) w (t y)} 
    + \sum_{\upontop{x^{\bot} \leq t}{t_{k}=d_{k} -x_{k} -1}}{ \mu (0, Rxt) w (t y)} \\
    &= \sum_{\upontop{x^{\bot} \leq t}{t_{k}=d_{k} - x_{k} -1}}{ \mu (0, Rxtp_{k}) 
      w (t p_{k} y) + \mu (0, Rxt) w(t y)} \\
    &=  \sum_{\upontop{x^{\bot} \leq t}{t_{k}=d_{k}-x_{k}-1}}{ (\mu (0, Rxtp_{k}) 
      + \mu (0, Rxt) ) w(t y)} 
  \end{align*}
  Now since $\mu (0, Rxt) = (-1)^{\sum{d_j - x_j -t_j}}$, it follows
  that $\mu (0, Rxt) + \mu (0, Rxtp_k) = 0$. Hence $w \circledast
  \eta_x (y) = 0$ when $Ry \nleqslant Rx$.
\end{proof}

Thus the matrix of coefficients of the weight $w$ with respect to the
basis $\{ \eta_x \mid x \in E\setminus\{0\}\}$ is triangular. We
require for $w$ to generate $W$ that the diagonal elements are
nonzero, indeed this is sufficient. Combining all of these elements we
arrive at our main theorem.

\begin{theorem}
  Let $R$ be a finite direct product of finite chain rings with $E$
  the set of representatives of the ideals of $R$. If $w \in W$ with
  \[ \sum_{x^{\bot} \leq t}{\mu(0, Rxt) w(tx)} \neq 0 \quad
  \mbox{ for all } x \in E\setminus\{0\} \:, \]
  then MacWilliams' Equivalence Theorem holds for $w$.
\end{theorem}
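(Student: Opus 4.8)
The plan is to deduce the theorem from the Proposition above together with the Remark preceding it; the only real work is to show that the stated nonvanishing condition is equivalent to $w \circledast S = W$.

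First I would pin down dimensions. Since every ideal of $R$ has the form $Re$ with $e \in E$, and every $a \in R$ lies in $R^{\times}e$ for the unique $e \in E$ with $Ra = Re$, an invariant weight is determined by its restriction to $E \setminus \{0\}$; thus $v \mapsto (v(e))_{e \in E\setminus\{0\}}$ is a $\complex$-linear isomorphism $W \To \complex^{E\setminus\{0\}}$, and in particular $\dim_{\complex} W = \abs{E\setminus\{0\}}$. On the other hand, by the lemma identifying $\{\eta_x \mid x \in E\setminus\{0\}\}$ as a basis of $S$ and the bilinearity of correlation, the submodule $w \circledast S \subseteq W$ (using that $W$ is a right $S$-module) is the $\complex$-linear span of the $\abs{E\setminus\{0\}}$ functions $w \circledast \eta_x$, $x \in E\setminus\{0\}$.

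Next I would form the matrix $M = \bigl( (w \circledast \eta_x)(y) \bigr)_{x,y \in E\setminus\{0\}}$, that is, the matrix of the family $\{w \circledast \eta_x\}$ read through the isomorphism $W \cong \complex^{E\setminus\{0\}}$. By the Proposition, $(w \circledast \eta_x)(y) = 0$ whenever $Rx \nleqslant Ry$; ordering the index set $E\setminus\{0\}$ by the rank ordering (exactly as in the proof that the $\eta_x$ form a basis of $S$) this makes $M$ triangular, with diagonal entries
\[ (w \circledast \eta_x)(x) \;=\; \sum_{x^{\bot} \le t} \mu(0, Rxt)\, w(tx) \,. \]
The hypothesis of the theorem asserts precisely that each of these diagonal entries is nonzero, so $M$ is invertible. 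Hence $\{w \circledast \eta_x \mid x \in E\setminus\{0\}\}$ is a linearly independent family of $\abs{E\setminus\{0\}} = \dim_{\complex} W$ vectors in $W$, and therefore a basis of $W$; consequently $w \circledast S = W$.

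Finally, a finite direct product of finite chain rings is a finite Frobenius ring, so the situation of the Remark preceding the theorem is in force: from $w \circledast S = W$ we obtain $w \circledast h = w_{H}$ for some $h \in S$, where $w_{H}$ is the Hamming weight. Given any linear $w$-isometry $\phi : C \to {}_RR^{n}$, Lemma~\ref{lemiso} shows $\phi$ is a $w_{H}$-isometry, i.e.\ a linear Hamming isometry; by the Extension Theorem for the Hamming weight over finite Frobenius rings (\cite{wood97}) it extends to an $R^{\times}$-monomial transformation of $R^{n}$. Since $w$ is invariant, ${\rm Sym}_r(w) = R^{\times}$, so this extension is a ${\rm Sym}_r(w)$-monomial transformation, which is exactly what the Extension Theorem for $w$ demands. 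I expect the only delicate point to be the bookkeeping of the third paragraph — checking that the triangularity furnished by the Proposition is relative to an ordering under which "triangular with nonvanishing diagonal" genuinely implies "invertible," and that the count of $\eta_x$'s matches $\dim_{\complex} W$ on the nose, so that linear independence already forces $w \circledast S$ to exhaust $W$. The external input (products of chain rings are Frobenius, so the Hamming Extension Theorem is available) is standard.
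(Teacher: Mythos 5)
Your proposal is correct and follows essentially the same route as the paper: the Proposition gives triangularity of the matrix $\bigl((w\circledast\eta_x)(y)\bigr)$ with respect to the rank ordering, the hypothesis makes the diagonal entries nonzero so that $w\circledast S=W$, and the Remark (via the Frobenius property and the Hamming Extension Theorem) then yields the result. The paper leaves these steps as a brief summary before the theorem statement; your write-up merely supplies the dimension count and bookkeeping explicitly.
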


We remark that a finite commutative ring is a direct product of chain
rings if and only if it is a principal ideal ring.  Hence the theorem
applies in particular to finite commutative principal ideal rings.


\section*{Conclusion}

By considering the module of invariant weights in terms of an algebra
of complex functions we have determined the conditions an invariant
weight defined on a direct product of chain rings must satisfy for
MacWilliams' equivalence theorem to hold. Thus provided these
conditions are satisfied all isometries of that weight will extend to
monomial transformations.



\bibliographystyle{plain}
\bibliography{biblio1}

\end{document}